\theoremstyle{plain}
\newtheorem{thm}{Theorem}
\newtheorem{lem}[thm]{Lemma}
\newtheorem{cor}[thm]{Corollary}
\newcommand{\N}{\mathbb N}
\title{Omega subgroups of powerful $p$-groups}
\author{Gustavo A. Fern\'andez-Alcober}
\address{Matematika Saila\\ Euskal Herriko Unibertsitatea\\ 48080 Bilbao\\ Spain}
\email{gustavo.fernandez@ehu.es}
\thanks{Supported by the Spanish Ministry of Science and Education, grant MTM2004-04665,
partly with FEDER funds, and by the University of the Basque Country, grant UPV05/99.}
\begin{document}

\begin{abstract}
Let $G$ be a powerful finite $p$-group.
In this note, we give a short elementary proof of the following facts for all $i\ge 0$:
(i) $\exp \Omega_i(G)\le p^i$ for odd $p$, and $\exp \Omega_i(G)\le 2^{i+1}$
for $p=2$; (ii) the index $|G:G^{p^i}|$ coincides with the number of
elements of $G$ of order at most $p^i$.
\end{abstract}

\maketitle

Let $G$ be a finite $p$-group.
For every $i\ge 0$, denote by $\Omega_{\{i\}}(G)$ the set of elements of $G$
of order at most $p^i$, and let $\Omega_i(G)$ be the subgroup it generates.
On some occasions, $\Omega_i(G)$ coincides with $\Omega_{\{i\}}(G)$, in other
words, the exponent of $\Omega_i(G)$ is at most $p^i$.
This happens, of course, for abelian groups, and also for powerful $p$-groups
if $p$ is odd, as shown by L. Wilson in \cite{Wi2}.
In his Ph.D.\ Thesis \cite{Wi1}, Wilson also obtained that
$\exp\Omega_i(G)\le 2^{i+1}$ for $p=2$, which is best possible.
On the other hand, H\'ethelyi and L\'evai \cite{HL} have proved that
$|G:G^{p^i}|=|\Omega_{\{i\}}(G)|$ for all powerful $p$-groups.
These properties of powerful $p$-groups have been generalized by
Gonz\'alez-S\'anchez and Jaikin-Zapirain \cite{GJ} to normal subgroups
lying inside $G^2$.

The aim of this note is to provide a short direct proof of the results of
Wilson and H\'ethelyi-L\'evai.
The proof is completely elementary and only uses basic properties of powerful
$p$-groups (as can be found in Chapter 2 of \cite{DDMS} or Chapter 11 of \cite{Kh}),
and the following version of Hall's collection formula: if $x$ and $y$
are elements of a group $G$, $p$ is a prime and $n\in\N$, then
\[
(xy)^{p^n} \equiv x^{p^n}y^{p^n}
\pmod{\gamma_2(H)^{p^n}\gamma_p(H)^{p^{n-1}} \ldots \gamma_{p^n}(H)}
\]
and
\[
[x,y]^{p^n} \equiv [x^{p^n},y]
\pmod{\gamma_2(K)^{p^n}\gamma_p(K)^{p^{n-1}} \ldots \gamma_{p^n}(K)},
\]
where $H=\langle x,y \rangle$ and $K=\langle x,[x,y] \rangle$.

It is convenient to make the following convention in order to write our results more
easily: if $G$ is a $p$-group and $x\in G$, let us define the meaning of the inequality
$o(x)\le p^i$ with $i<0$ (which is actually impossible to hold) to be that $x=1$.
Define accordingly $\Omega_i(G)=\Omega_{\{i\}}(G)=1$ for all $i<0$, which is coherent with
the definition given for $i\ge 0$.

\begin{thm}
\label{omega}
Let $G$ be a powerful $p$-group.
Then, for every $i\ge 0$:
\begin{enumerate}
\item
If $x,y\in G$ and $o(y)\le p^i$, then $o([x,y])\le p^i$.
\item
If $x,y\in G$ are such that $o(x)\le p^{i+1}$ and $o(y)\le p^i$, then
$o([x^{p^j},y^{p^k}])$\break
$\le p^{i-j-k}$ for all $j,k\ge 0$.
\item
If $p$ is odd, then $\exp\Omega_i(G)\le p^i$.
\item
If $p=2$, then $\exp\Omega_i(T)\le 2^i$ for any subgroup $T$ of $G$
which is cyclic over $G^2$.
In particular, $\exp\Omega_i(G^2)\le 2^i$.
\end{enumerate}
\end{thm}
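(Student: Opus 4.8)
The plan is to establish the four assertions \emph{simultaneously} by a double induction: an outer induction on $i$ and, for each fixed $i$, an inner induction on the nilpotency class of $G$. The cases $i\le 0$ hold by the convention fixed above, and the abelian case of every assertion is immediate, since then all commutators are trivial and $\Omega_{\{i\}}(G)$ is already a subgroup. Throughout I would lean on the two forms of Hall's collection formula recalled in the introduction, together with the basic structure theory of powerful $p$-groups: that $\gamma_2(G)$ is powerfully embedded, hence itself a powerful group of smaller class; that $\gamma_{k+2}(G)\le\gamma_2(G)^{p^k}$ for odd $p$; the identity $[G,G^{p^k}]=\gamma_2(G)^{p^k}$; and the fact that every element of $G^{p^k}$ is a genuine $p^k$-th power.

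To prove (i) I would apply the commutator form of Hall's formula to $[y,x]$ rather than to $[x,y]$, so that the hypothesis $o(y)\le p^i$ enters through the leading term: from $y^{p^i}=1$ one gets $[y,x]^{p^i}\equiv[y^{p^i},x]=1$ modulo $\gamma_2(K)^{p^i}\gamma_p(K)^{p^{i-1}}\cdots\gamma_{p^i}(K)$, where $K=\langle y,[y,x]\rangle$. The decisive observation is that in $K$ every occurrence of the generator $y$ carries an extra unit of commutator weight, so that $\gamma_{p^m}(K)\le\gamma_{p^m+1}(G)$; substituting the powerful bound $\gamma_{k+2}(G)\le\gamma_2(G)^{p^k}$ shows that, for odd $p$, each correction factor $\gamma_{p^m}(K)^{p^{i-m}}$ is absorbed into $\gamma_2(G)^{p^{i+1}}$. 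Hence $[x,y]^{p^i}=[y,x]^{-p^i}$ lands in the strictly deeper powerful subgroup $\gamma_2(G)^{p^{i+1}}$, and the inner induction on the class (applied to $G/\gamma_c(G)$, which is powerful of smaller class) then drives it down to the identity.

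Part (ii) I would deduce from (i) by a further induction on $j+k$, the base case $j=k=0$ being (i) itself. For the inductive step the powerful form of Hall's commutator formula yields $[x^{p^j},y^{p^k}]\equiv[x^{p^j},y^{p^{k-1}}]^{p}$ modulo terms that the powerful structure again pushes deep enough to neglect, so that stripping one power of $p$ from an entry divides the order of the commutator by $p$; this is exactly the arithmetic behind the exponent $i-j-k$, and the symmetric identity treats the $j$-variable (here the slightly larger bound $o(x)\le p^{i+1}$ is what is needed). Parts (iii) and (iv) are then the payoff. For odd $p$ and $x,y$ of order at most $p^i$, the power form of Hall's formula gives $(xy)^{p^i}\equiv x^{p^i}y^{p^i}=1$ modulo the subgroup generated by the terms $\gamma_{p^m}(H)^{p^{i-m}}$, where $H=\langle x,y\rangle$. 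Since $\gamma_2(H)$ is the normal closure of $[x,y]$ and $o([x,y])\le p^i$ by (i), these correction generators are commutators of bounded order inside the lower-class powerful group $\gamma_2(G)$; using (i) and (ii) to bound their orders, together with the class induction (which supplies all four assertions inside $\gamma_2(G)$), one shows that the correction is trivial, so $(xy)^{p^i}=1$ and $\Omega_{\{i\}}(G)$ is a subgroup of exponent at most $p^i$.

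The main obstacle throughout is the control of these correction terms, and this is precisely where $p=2$ behaves differently. The inequality $p^m-m-1\ge 1$ that lets every correction factor in the commutator formula drop a full power of $p$ for odd $p$ fails at $p=2$, $m=1$; one power of $p$ is lost, and the argument for (iii) then only delivers $\exp\Omega_i(G)\le 2^{i+1}$. To recover the sharp bound in (iv) I would feed in the hypothesis that $T$ is cyclic over $G^2$: writing $T=\langle t\rangle(T\cap G^2)$, one finds $\gamma_2(T)\le[G,G^2]=\gamma_2(G)^2$, which carries exactly one extra factor of $2$ and so compensates the lost power, restoring $\exp\Omega_i(T)\le 2^i$; the case of $G^2$ follows similarly, since every element of $G^2$ is a square and the relevant commutators thereby acquire the missing power of $2$. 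Verifying that this single extra power threads correctly through both Hall formulas and the double induction is the delicate, and decisive, point of the whole argument.
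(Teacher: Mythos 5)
Your overall toolkit (Hall's collection formula plus the power--commutator identities of powerful groups) is the right one, but the argument as written has a genuine gap already in part (i), and the later parts inherit it. In (i) you show, after absorbing the correction terms, that $[x,y]^{p^i}$ lies in $\gamma_2(G)^{p^{i+1}}$. Membership in a deeper verbal subgroup is not triviality: you still owe an argument that this element is $1$, and the appeal to the inner induction on the class does not supply one. Passing to $G/\gamma_c(G)$ only yields $[x,y]^{p^i}\in\gamma_c(G)$, and the intersection $\gamma_c(G)\cap\gamma_2(G)^{p^{i+1}}$ need not be trivial; to convert ``$[x,y]^{p^i}$ is a $p^{i+1}$-st power in $\gamma_2(G)$'' into ``$[x,y]^{p^i}=1$'' you would need order information inside $\gamma_2(G)$ of exactly the kind that (ii) and (iii) are supposed to provide, which makes the scheme circular as set up. The paper sidesteps this entirely with a different reduction: $[x,y]=(y^{-1})^x\,y$ is a product of two elements of order at most $p^i$ inside the \emph{proper} powerful subgroup $\langle y,G'\rangle$ (respectively a subgroup cyclic over $H^2$ with $H=\langle y,G^2\rangle$ when $p=2$), so (iii)/(iv) applied to a group of smaller order finish (i); the global induction is on $|G|$, not on $i$ and the class.

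The same difficulty recurs, unaddressed, in (ii)--(iv): in each case you declare that the correction terms $\gamma_{p^m}(H)^{p^{i-m}}$ are ``pushed deep enough to neglect'' or ``absorbed,'' but bounding the \emph{orders} of elements of these subgroups, and then the \emph{exponent} of the subgroup they generate, is the entire content of the proof. The paper does this in (ii) by a reverse induction on $i$ (starting at $i\ge e$ where everything is a high power of a single element), proving $\gamma_{r+2}(T)\le\Omega_{i-k-r}(T)$ by an inner induction on $r$ and using $\exp\Omega_n(T)\le p^n$ for the proper subgroup $T=\langle x,[x,y^{p^k}]\rangle$; none of this bookkeeping appears in your sketch. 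For (iv) your heuristic that the hypothesis ``cyclic over $G^2$'' buys one extra factor of $2$ via $\gamma_2(T)\le\gamma_2(G)^2$ is a reasonable guess, but you concede yourself that threading it through the induction is unverified; the paper in fact needs a genuinely different, element-by-element argument showing $\Omega_1(T)$ is abelian (writing $b=av$ with $v\in G^2$, extracting $u\in G^4$ with $u^2=v^2$, and so on), which suggests the power-counting alone does not close the case $p=2$. As it stands the proposal is a plausible plan rather than a proof.
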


\begin{proof}
We apply induction on the order of $G$ globally to all the assertions of the
theorem.
Thus we prove that (i) through (iv) hold for a group $G$ under the assumption
that they hold for all powerful groups of smaller order.
Of course, we may suppose that $G$ is non-cyclic.

(i)
The commutator $[x,y]=(y^{-1})^xy$ is a product of two elements of order
$\le p^i$ in the subgroup $T=\langle y,G'\rangle$.
If $p$ is odd, then $T$ is a proper powerful subgroup of $G$, by Lemma 11.7
of \cite{Kh}.
If $p=2$, then $H=\langle y,G^2\rangle$ is also powerful and proper in $G$,
and $T$ is a subgroup of $H$ that is cyclic over $H^2$.
Since (iii) and (iv) hold for powerful groups of order smaller than that
of $G$, the result follows.

(ii)
In this case, we also apply reverse induction on $i$.
If we write $\exp G=p^e$, then the result is clear for $i\ge e$:
just note that $[x^{p^j},y^{p^k}]=g^{p^{j+k}}$ for some $g\in G$,
since $G$ is powerful.
Suppose now $i<e$ and put $T=\langle x,[x,y^{p^k}]\rangle$.
By Hall's collection formula,
\begin{equation}
\label{Hall}
[x^{p^j},y^{p^k}]
\equiv
[x,y^{p^k}]^{p^j}
\pmod{\gamma_2(T)^{p^j}\gamma_p(T)^{p^{j-1}}\ldots\gamma_{p^j}(T)}.
\end{equation}
Since $o([x,y^{p^k}])\le o(y^{p^k})\le p^{i-k}$, it follows that
$[x,y^{p^k}]^{p^j}\in\Omega_{i-j-k}(T)$.
Note that, arguing as in the last paragraph, the induction on the group
order yields that $\exp \Omega_n(T)\le p^n$ for all $n$.
(In particular $\exp T\le p^{i+1}$.)
Thus if we prove that all the subgroups appearing in the modulus of
congruence (\ref{Hall}) lie in $\Omega_{i-j-k}(T)$, then we can conclude
that the order of $[x^{p^j},y^{p^k}]$ is at most $p^{i-j-k}$, as desired.

First, since $\gamma_2(T)=\langle [x,y^{p^k},x]\rangle^T\le\Omega_{i-k}(T)$,
we get $\gamma_2(T)^{p^j}\le\Omega_{i-j-k}(T)$.
Let us now prove by induction on $r$ that $\gamma_{r+2}(T)\le\Omega_{i-k-r}(T)$
for all $r\ge 0$.
We have just seen this for $r=0$.
In the general case, it suffices to show that if $a\in\gamma_{r+1}(T)$ and
$b\in T$ then $o([a,b])\le p^{i-k-r}$.
Since $\gamma_{r+1}(T)\le\Omega_{i-k-r+1}(T)$, we know that
$o(a)\le p^{i-k-r+1}$.
On the other hand,
$a\in\gamma_{r+1}(T)\le [G^{p^k},G,\overset{r+1}{\ldots},G]$
and $G$ being powerful imply that $a=g^{p^{k+r+1}}$ for some $g\in G$,
and then $o(g)\le p^{i+2}$.
But $b\in T$ has order $\le p^{i+1}$, so the reverse induction
on $i$ we are applying yields that
$o([a,b])=o([g^{p^{k+r+1}},b])\le p^{i-k-r}$.

Now if either $p$ is odd and $r\ge 1$ or if $p=2$ and $r\ge 2$, we have
$\gamma_{p^r}(T)\le\gamma_{r+2}(T)$, and consequently
$\gamma_{p^r}(T)^{p^{j-r}}\le\Omega_{i-j-k}(T)$.
Thus we only have to check that
$\gamma_2(T)^{2^{j-1}}\le\Omega_{i-j-k}(T)$ for $p=2$.
This follows by observing that $[x,y^{2^k},x]=[g^{2^{k+2}},x]$ for some
$g$ of order $\le 2^{i+2}$ and hence, by the reverse induction on $i$,
this commutator has order at most $2^{i-k-1}$.

(iii)
Let $x,y,z\in G$ be elements of order $p$.
Then $[x,y]=g^p$ for some $g$ of order at most $p^2$, and it follows from
(ii) that $[x,y,z]=1$.
Hence the nilpotency class of $\Omega_1(G)$ is at most $2$.
Since $p$ is odd, $\Omega_1(G)$ is then regular and $\exp\Omega_1(G)\le p$.
For the general case, by the induction on the group order, we get
$\exp\Omega_{i-1}(G/\Omega_1(G))\le p^{i-1}$.
This, together with $\exp\Omega_1(G)\le p$, yields that
$\exp\Omega_i(G)\le p^i$.

(iv)
As in (iii), it is enough to prove that $\exp\Omega_1(T)\le 2$.
(We need $T\trianglelefteq G$ in order to apply the induction hypothesis to
$G/\Omega_1(T)$, but this is not a problem, since we may assume $G^2\le T$.)
For this purpose, we are going to see that $\Omega_1(T)$ is abelian,
that is, that any two elements $a,b\in T$ of order $2$ commute.
If either $a$ or $b$ lie in $G^2$, this is a consequence of (ii), so we
suppose that $a,b\in\Omega_1(T)\setminus G^2$.
Since $T$ is cyclic over $G^2$, it follows that $b=av$ for some $v\in G^2$,
and we have to prove that $a$ and $v$ commute.
Now $v^2=(ab)^2=[b,a]=[v,a]\in G^8$, so there exists $u\in G^4$ such that
$u^2=v^2$.
We also deduce that $v^2$ has order at most $2$, whence $o(u)=o(v)\le 4$.
It is then a consequence of (ii) that $u$ and $v$ commute.
Put $w=v^{-1}u\in G^2$.
Since $w^2=1$ it follows again from (ii) that $w$ commutes with $a$ and,
of course, it also commutes with $v$.
Hence $(au)^2=(avw)^2=(av)^2=1$ and, since $a,au\in\langle a,G^4\rangle$,
we may apply the induction on the group order to deduce that $a$ and $au$
commute.
Since $au=avw$ we conclude that $a$ commutes with $v$, as desired.
\end{proof}

\begin{cor}
Let $G$ be a powerful $2$-group.
Then:
\begin{enumerate}
\item
$\exp [\Omega_i(G),G]\le 2^i$.
\item
$\exp\Omega_i(G)\le 2^{i+1}$.
\end{enumerate}
\end{cor}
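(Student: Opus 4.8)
The plan is to deduce both parts from Theorem \ref{omega}, using throughout that for a powerful $2$-group one has $G'\le G^4\le G^2$, and that the exponent of $\Omega_i(G^2)$ is at most $2^i$ by the ``in particular'' clause of Theorem \ref{omega}(iv). The guiding idea is that squares and commutators of elements of $\Omega_i(G)$ are pushed into $G^2$, where the sharp bound $2^i$ is available.

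For (i) I would first show the inclusion $[\Omega_i(G),G]\le\Omega_i(G^2)$. Since $\Omega_i(G)$ is generated by the elements $y$ of order at most $2^i$, the identity $[y_1y_2,x]=[y_1,x]^{y_2}[y_2,x]$ shows that $[\Omega_i(G),G]$ is generated by the $\Omega_i(G)$-conjugates of the basic commutators $[y,x]$ with $o(y)\le 2^i$ and $x\in G$. Each such $[y,x]$ lies in $G'\le G^2$, and by Theorem \ref{omega}(i) it has order at most $2^i$; conjugation preserves both properties. Hence $[\Omega_i(G),G]$ is generated by elements of $G^2$ of order at most $2^i$, so it is contained in $\Omega_i(G^2)$, and Theorem \ref{omega}(iv) gives $\exp[\Omega_i(G),G]\le\exp\Omega_i(G^2)\le 2^i$.

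For (ii) the key tool is the identity $(ab)^2=a^2b^2[b,a]^b$. I would prove that every square of an element of $\Omega_i(G)$ lies in $\Omega_i(G^2)$ by introducing the set $S=\{g\in\Omega_i(G):g^2\in\Omega_i(G^2)\}$ and showing it is a subgroup containing the generators. It contains every generator $y$, because $o(y)\le 2^i$ forces $y^2\in G^2$ with $o(y^2)\le 2^{i-1}$, so $y^2\in\Omega_i(G^2)$; it is closed under inverses since $(g^{-1})^2=(g^2)^{-1}$; and it is closed under products, for if $a,b\in S$ then $a^2,b^2\in\Omega_i(G^2)$ while $[b,a]^b\in[\Omega_i(G),G]\le\Omega_i(G^2)$ by the already-proved part (i), whence $(ab)^2\in\Omega_i(G^2)$. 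Thus $S=\Omega_i(G)$. Consequently every $a\in\Omega_i(G)$ satisfies $a^2\in\Omega_i(G^2)$, so $o(a^2)\le 2^i$ and therefore $o(a)\le 2^{i+1}$, which is exactly $\exp\Omega_i(G)\le 2^{i+1}$.

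The one point needing care is the reduction in (i) to the basic commutators $[y,x]$ with $y$ of order at most $2^i$: a general element of $\Omega_i(G)$ need not have order $\le 2^i$, so Theorem \ref{omega}(i) cannot be applied to it directly, and one must genuinely invoke normality of $[\Omega_i(G),G]$ together with the commutator expansion above to obtain a generating set of elements of controlled order. Once that reduction is in place, the rest is the routine bookkeeping of the squaring identity and the fact that $\Omega_i(G^2)$ is an honest subgroup, so that products of its elements remain inside it.
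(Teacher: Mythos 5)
Your proposal is correct and follows essentially the same route as the paper: both parts rest on the inclusion $[\Omega_i(G),G]\le\Omega_i(G^2)$ obtained from Theorem \ref{omega}(i) together with $G'\le G^2$, followed by an appeal to Theorem \ref{omega}(iv). Your subgroup $S$ in part (ii) is just an elementwise reformulation of the paper's observation that $\Omega_i(G)/\Omega_i(G^2)$ is abelian and generated by elements of order at most $2$, hence of exponent at most $2$.
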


\begin{proof}
(i)
By part (i) of Theorem \ref{omega}, $[\Omega_i(G),G ]$ can be generated
by elements of order at most $2^i$.
Since $G$ is powerful, it follows that $[\Omega_i(G),G]\le\Omega_i(G^2)$,
and then we only have to apply part (iv) of Theorem \ref{omega}.

(ii)
As observed above, we have $[\Omega_i(G),G]\le\Omega_i(G^2)$, and
consequently the factor group $\Omega_i(G)/\Omega_i(G^2)$ is abelian.
But this group can be generated by elements of order at most $2$, hence
its exponent is also at most $2$.
Since $\exp\Omega_i(G^2)\le 2^i$, we conclude that
$\exp\Omega_i(G)\le 2^{i+1}$.
\end{proof}

Some other properties of omega subgroups of powerful $p$-groups can be
immediately deduced from Theorem \ref{omega}, and their proof is left
to the reader.
For example, that for odd $p$ the subgroup $\Omega_1(G^p)$ is powerful
and that $\exp \gamma_{r+2}(\Omega_i(G))\le p^{i-r}$ for all $r\ge 0$.
In particular, the nilpotency class of $\Omega_i(G)$ is at most $i+1$
for $p>2$.

Finally, we prove the result of H\'ethelyi and L\'evai.
We need the following lemma, which is an immediate consequence of Hall's
collection formula.

\begin{lem}
Let $G$ be a powerful $p$-group of exponent $p^e$.
Then for every $0\le i\le e-1$, and every $x\in G$, $y\in G^{p^{e-i-1}}$,
we have $(xy)^{p^i}=x^{p^i}y^{p^i}$.
\end{lem}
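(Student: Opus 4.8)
The plan is to apply the first form of Hall's collection formula recalled above, with $n=i$ and $H=\langle x,y\rangle$, so that
\[
(xy)^{p^i}\equiv x^{p^i}y^{p^i}
\pmod{\gamma_2(H)^{p^i}\gamma_p(H)^{p^{i-1}}\cdots\gamma_{p^i}(H)},
\]
and then to show that every factor occurring in the modulus is trivial; the asserted equality follows at once. Throughout I would use only the standard facts about a powerful $p$-group $G$ of exponent $p^e$: that $(G^{p^a})^{p^b}=G^{p^{a+b}}$, that $[G^{p^a},G]\le G^{p^{a+1}}$, that $G^{p^e}=1$, and that $G'\le G^p$ for odd $p$ while $G'\le G^4$ for $p=2$.

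The first step is to bound the lower central terms of $H$. Since $y\in G^{p^{e-i-1}}$ and every nontrivial commutator of weight $m\ge2$ in $x$ and $y$ involves at least one entry equal to $y$, a short induction on $m$ gives
\[
\gamma_m(H)\le[G^{p^{e-i-1}},G,\overset{m-1}{\ldots},G]\le G^{p^{e-i+m-2}}.
\]
Substituting this into the factor $\gamma_{p^k}(H)^{p^{i-k}}$ yields $\gamma_{p^k}(H)^{p^{i-k}}\le G^{p^{e+p^k-k-2}}$ for $1\le k\le i$, while the leading factor obeys $\gamma_2(H)^{p^i}\le G^{p^{e}}=1$.

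For odd $p$ this already finishes the argument: one has $p^k-k-2\ge0$ for every $k\ge1$, so each factor lies in $G^{p^e}=1$. For $p=2$ the same estimate kills every factor with $k\ge2$, since then $2^k-k-2\ge0$; the one delicate term is the $\gamma_p(H)$-slot, which for $p=2$ is $\gamma_2(H)^{2^{i-1}}$, and the crude bound only places it in $G^{2^{e-1}}$, which need not be trivial. I expect this to be the main obstacle, and it is resolved precisely by exploiting the hypothesis $y\in G^{2^{e-i-1}}$ together with the stronger relation $G'\le G^4$. Writing $y=w^{2^{e-i-1}}$ and applying the second form of Hall's collection formula to get $[x,w^{2^{e-i-1}}]\equiv[x,w]^{2^{e-i-1}}$, and then using $[x,w]\in G'\le G^4=G^{2^2}$, one improves the bound to $\gamma_2(H)\le G^{2^{e-i+1}}$, one level deeper than before. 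Raising this to $2^{i-1}$ lands in $G^{2^e}=1$, so every factor of the modulus vanishes and the case $p=2$ is complete.
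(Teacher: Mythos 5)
Your proof is correct and follows exactly the route the paper intends (the paper gives no written proof, calling the lemma an immediate consequence of Hall's collection formula): expand $(xy)^{p^i}$ by the first collection formula with $H=\langle x,y\rangle$ and kill every factor of the modulus using $\gamma_m(H)\le [G^{p^{e-i-1}},G,\overset{m-1}{\ldots},G]\le G^{p^{e-i+m-2}}$ together with $G^{p^e}=1$, the only delicate slot being $\gamma_2(H)^{2^{i-1}}$ when $p=2$. One remark on that slot: your subsidiary application of the second collection formula carries its own modulus (terms $\gamma_{2^k}(K)^{2^{e-i-1-k}}$ for $K=\langle w,[w,x]\rangle$) which you do not verify, though it does land inside $G^{2^{e-i+1}}$; more economically, the standard fact that $G^{2^a}$ is powerfully embedded in $G$ gives $[G^{2^a},G]\le (G^{2^a})^4=G^{2^{a+2}}$ outright, so $\gamma_2(H)\le [G^{2^{e-i-1}},G]\le G^{2^{e-i+1}}$ and hence $\gamma_2(H)^{2^{i-1}}\le G^{2^e}=1$ with no second collection step.
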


\begin{thm}
Let $G$ be a powerful $p$-group.
Then $|G:G^{p^i}|=|\Omega_{\{i\}}(G)|$ for all $i\ge 0$.
\end{thm}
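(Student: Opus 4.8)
The plan is to analyze the $p^i$-th power map $\pi\colon G\to G^{p^i}$, $x\mapsto x^{p^i}$, and to show that all of its fibers have the same cardinality. Since $G$ is powerful, $G^{p^i}$ consists precisely of the $p^i$-th powers of elements of $G$, so $\pi$ maps onto $G^{p^i}$; moreover its fiber over $1$ is exactly $\Omega_{\{i\}}(G)$. Once all fibers are known to have equal size, counting $G$ fibrewise gives $|G|=|G^{p^i}|\cdot|\Omega_{\{i\}}(G)|$, which is the assertion. I would argue by induction on $|G|$, the case $i\ge e$ (where $\exp G=p^e$) being immediate, since then $\pi$ is the trivial map and both sides equal $|G|$. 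So I may assume $0\le i\le e-1$.

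The first step is to use the lemma to move fibers around horizontally. Set $M=G^{p^{e-i-1}}$ and $N=G^{p^{e-1}}$, and note that $N\le G^{p^i}$ because $e-1\ge i$, while $N\ne 1$. For $y\in M$ the lemma gives $(xy)^{p^i}=x^{p^i}y^{p^i}$ for every $x\in G$, so right multiplication by $y$ restricts to a bijection $\pi^{-1}(g)\to\pi^{-1}(g\,y^{p^i})$ for each $g\in G^{p^i}$. As $y$ runs through $M$, the powers $y^{p^i}$ run through all of $M^{p^i}=G^{p^{e-1}}=N$ (using that in a powerful group $M^{p^i}$ is the set of $p^i$-th powers of $M$). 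Hence $|\pi^{-1}(g)|=|\pi^{-1}(gn)|$ for every $n\in N$; that is, the fiber size is constant along the cosets of $N$ inside $G^{p^i}$.

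The second step is to descend to $\bar G=G/N$, which is powerful of smaller order, and whose $p^i$-th power map I denote $\bar\pi$. For a fixed $g\in G^{p^i}$ the preimage in $G$ of $\bar\pi^{-1}(\bar g)$ is $\{x: x^{p^i}\in gN\}=\bigsqcup_{n\in N}\pi^{-1}(gn)$, and computing its cardinality in two ways---once as $|N|\cdot|\bar\pi^{-1}(\bar g)|$, and once as $\sum_{n\in N}|\pi^{-1}(gn)|=|N|\cdot|\pi^{-1}(g)|$ by the first step---yields $|\pi^{-1}(g)|=|\bar\pi^{-1}(\bar g)|$. By the induction hypothesis applied to $\bar G$, all fibers of $\bar\pi$ have the same size, and therefore so do those of $\pi$, completing the proof.

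I expect the crux to be recognising that the constancy of fiber sizes splits into exactly these two complementary statements: a \emph{horizontal} constancy along the $N$-cosets of $G^{p^i}$, which is precisely what the special shape of the lemma delivers (the second factor must lie in $G^{p^{e-i-1}}$, and its $p^i$-th powers sweep out all of $N$), and a \emph{vertical} descent to the proper quotient $G/N$, handled by induction. The single point demanding care is that the two subgroups match up numerically, i.e.\ that the double count over the coset $gN$ cancels the factor $|N|$ cleanly; everything else rests on standard facts about powers in powerful $p$-groups.
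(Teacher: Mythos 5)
Your proof is correct, and it takes a genuinely different route from the paper's, even though both rest on the same lemma and both pass to the quotient $\overline G=G/G^{p^{e-1}}$. The essential difference is the inductive invariant: you strengthen the claim to ``all fibers of the $p^i$-th power map $\pi$ over $G^{p^i}$ are equinumerous'' and induct on that, whereas the paper inducts on the literal statement $|G:G^{p^i}|=|\Omega_{\{i\}}(G)|$, which only controls the fiber over the identity. This strengthening is what lets you get away with a single application of the induction hypothesis (to the quotient $\overline G$); the paper must apply its weaker hypothesis twice, once to $\overline G$ and once to the proper subgroup $G^{p^{e-i-1}}$, in order to convert the count $|\Omega_{\{i\}}(G^{p^{e-i-1}})|$ arising from its ``evenly distributed'' argument back into an index. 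Your horizontal step (translating fibers by $y\in G^{p^{e-i-1}}$, whose $p^i$-th powers sweep out $G^{p^{e-1}}$) is the mirror image of the paper's observation that the elements of order at most $p^i$ in a coset $gG^{p^{e-i-1}}$ are exactly $g\,\Omega_{\{i\}}(G^{p^{e-i-1}})$; both are immediate from the lemma. A side benefit of your version is that the case $i=e-1$ needs no separate treatment, while the paper disposes of it first via the homomorphism $x\mapsto x^{p^{e-1}}$. When writing this up, make two things explicit: (1) that the induction hypothesis is the strengthened fiber statement for all powerful groups of smaller order and all $i$, since the theorem as literally stated does not by itself yield equality of \emph{all} fibers of $\overline\pi$; and (2) the standard facts you invoke, namely that $G^{p^j}$ is powerful, that $(G^{p^j})^{p^k}=G^{p^{j+k}}$, and that for powerful groups the subgroup $G^{p^j}$ coincides with the set of $p^j$-th powers (this is what makes $\pi$ surjective onto $G^{p^i}$ and makes the elements $y^{p^i}$ sweep out all of $N$). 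With those points made explicit, the argument is complete.
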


\begin{proof}
We argue by induction on the group order.
Let $\exp G=p^e$.
Of course, we only need to consider the case when $i\le e-1$.
On the other hand, by the previous lemma, the map $x\mapsto x^{p^{e-1}}$
is a homomorphism from $G$ onto $G^{p^{e-1}}$, and by the first isomorphism
theorem, $|\Omega_{\{e-1\}}(G)|=|G:G^{p^{e-1}}|$.

So we assume in the remainder that $i\le e-2$.
Set $\overline G=G/G^{p^{e-1}}$.
By the induction hypothesis,
$|G:G^{p^i}|=|\overline G:\overline G^{p^i}|=
|\Omega_{\{i\}}(\overline G)|$.
Hence if $X=\{x\in G\mid x^{p^i}\in G^{p^{e-1}}\}$ then
$|G:G^{p^i}|=|X|/|G^{p^{e-1}}|$.
Thus it suffices to prove that $|X|=|\Omega_{\{i\}}(G)||G^{p^{e-1}}|$.

Note that $x\in X$ if and only if there exists $y\in G^{p^{e-i-1}}$
such that $x^{p^i}=y^{p^i}$ and, by the previous lemma, this is
equivalent to $xy^{-1}\in\Omega_{\{i\}}(G)$.
Thus $X=\Omega_{\{i\}}(G)G^{p^{e-i-1}}$ is the union of all the cosets
of elements of $\Omega_{\{i\}}(G)$ with respect to the subgroup
$G^{p^{e-i-1}}$, and in order to get the cardinality of $X$ we have
to find out how many different cosets of that kind there are.
Let $gG^{p^{e-i-1}}$ be a coset with $g\in\Omega_{\{i\}}(G)$.
Again by the lemma, the elements of order at most $p^i$ in that coset
are exactly those in
$g(\Omega_{\{i\}}(G)\cap G^{p^{e-i-1}})=g\Omega_{\{i\}}(G^{p^{e-i-1}})$,
so their number is $|\Omega_{\{i\}}(G^{p^{e-i-1}})|$, which is
independent of the choice of $g$.
In other words, the elements of $\Omega_{\{i\}}(G)$ are evenly distributed in
the different cosets with respect to the subgroup $G^{p^{e-i-1}}$.
Hence the number of cosets we wanted is
$|\Omega_{\{i\}}(G)|/|\Omega_{\{i\}}(G^{p^{e-i-1}})|$ and
\begin{equation}
\label{cardinality X}
|X|
=
\frac{|\Omega_{\{i\}}(G)|}{|\Omega_{\{i\}}(G^{p^{e-i-1}})|}
\, |G^{p^{e-i-1}}|.
\end{equation}
Finally, since $i\le e-2$, it follows that $G^{p^{e-i-1}}$ is a proper
powerful subgroup of $G$.
By the induction hypothesis, we have
$|\Omega_{\{i\}}(G^{p^{e-i-1}})|=|G^{p^{e-i-1}}:G^{p^{e-1}}|$,
and we get from (\ref{cardinality X}) that
$|X|=|\Omega_{\{i\}}(G)||G^{p^{e-1}}|$, as desired.
\end{proof}

\end{document}